\pgfplotsset{compat=newest}
\theoremstyle{plain}
\newtheorem{theorem}{Theorem} 
\newtheorem{lemma}[theorem]{Lemma} 
\newtheorem{corollary}[theorem]{Corollary}
\newtheorem{proposition}[theorem]{Proposition}
\theoremstyle{definition} 
\newtheorem{example}[theorem]{Example}
\newcommand{\adef}{\begin{defn}}
\newcommand{\zdef}{\end{defn}}
\newtheorem{defn}[theorem]{Definition}
\def\D{\mathbb D}
\newcommand{\KP}{{\sf K}\hspace{-1pt}{\sf P}}
\title{The Butterfly lemma}
\author{Jes\'{u}s M .F. Castillo}
\address{Institute of Mathematics Imuex\\ Universidad de Extremadura\\
Avenida de Elvas\\ 06071-Badajoz\\ Spain} \email{castillo@unex.es}
\author{Daniel Morales}
\address{Institute of Mathematics Imuex \\ Universidad de Extremadura\\
Avenida de Elvas\\ 06071-Badajoz\\ Spain} \email{danmorg@unex.es}
\thanks{The research has been supported by MICIN Project PID2019-103961GB-C21 and
project IB20038 de la Junta de Extremadura. The second author was supported by the grant BES-2017-079901 related to the
project MTM2016-76958-C2-1-P}
\begin{document}

\begin{abstract} The Butterfly lemma we present can be considered a reiteration theorem for differentials generated from a complex interpolation process for families of K\"othe spaces. The lemma will be used to clarify the effect of different configurations in the resulting differential (because although interpolation is an orientation-free process, the obtention of differentials is not) and to round off a few aspects of Kalton's interpolation theorem.\end{abstract}

\maketitle

\section{Introduction}

The action of this paper will take place in the category of K\"othe function spaces over a $\sigma$-finite measure space $(\Sigma,\mu)$ with their
$L_\infty$-module structure; a particular case of which are Banach spaces with an unconditional basis with their
associated $\ell_\infty$-structure. We denote by $L_0$ the space of all $\mu$-measurable
functions over $(\Sigma,\mu)$. A K\"othe space $X$ is an $L_\infty$-submodule of $L_0$ endowed with a norm such that if $f, g\in X$ and $|f|\leq |g|$ then $\|f\|_X\leq \|g\|_X$. A \emph{centralizer on $X$} is a homogeneous map $\Omega: X \to L_0(\mu)$ for which there is a constant $C$ such that, given $f\in L_\infty(\mu)$ and $x\in X$, $\Omega(fx)- f\Omega(x)\in X$ and
$\|\Omega(fx)- f\Omega(x)\|_X\leq C\|f\|_\infty  \|x\|_X$. An homogeneous map $F$ between Banach spaces is called \emph{bounded} if $\sup_{\|x\|\leq 1} \|Fx\| <\infty$. The centralizer $\Omega$ is said to be \emph{trivial} if there exists a linear map $L: X\to L_0(\mu)$ such that $\Omega - L: X\to X$ is bounded. Two centralizers $\Omega, \Psi$ will be said to be (boundedly) equivalent if their difference
$\Omega -\Psi$ is (bounded) trivial.

We refer to \cite[section 10]{kaltonmontgomery}, \cite{berlof} and \cite{Coifman1982} for all basic facts about, respectively, abstract complex interpolation, complex interpolation for couples and complex interpolation for families.
Regarding the aim of this paper, and reduced to its bare bones, complex interpolation requires an open domain $D$ of the complex field $\mathbb C$. The domain $D$ is usually the unit strip $\mathbb S = \{ z\in \mathbb C: 0< \mathrm{Re}(z)<1\}$ for interpolation of pairs and the unit disk $\mathbb D = \{ z\in \mathbb C: |z|<1\}$ for families. On the border $\partial D$ of this domain one sets a family $(X_\omega)_{\omega \in \partial D}$ of Banach spaces all of them continuously contained in some Banach space $\Sigma$. One forms then a certain Banach space $\mathcal F$ of holomorphic functions defined on $D$ and with values on $\Sigma$ for which one asks that the evaluation functionals $\delta_z: \mathcal F \to \Sigma$ are continuous. This forces the evaluation of the derivative functionals  $\delta_z': \mathcal F \to \Sigma$ to be also continuous. The interpolation space at $z\in D$ will be the space $X_z = \{f(z): f\in \mathcal F\}$ endowed with its natural quotient norm. The derived space at $z\in D$ is the space $dX_z = \{(f'(z), f(z)): f\in \mathcal{F}\}$ endowed with its natural quotient norm. The differential (or derivation) of the family at $z$ is the map $\Omega_z = \delta_z' B_z$, where $B_z: X_z\to \mathcal{F}$ is a homogeneous bounded selection for the evaluation map
$\delta_z:\mathcal{F}\to \Sigma$. Note that different choices of selection $B_z$ lead to boundedly equivalent differentials at $z$. If $\|B_z\|=K$ then $B_z(x)$ is called a $K$-extremal for $x$. Thus, the value of a differential at $x$ can be obtained fixing $K$ and calculating the derivative of $K$-extremals for $x$. Differentials of families of K\"othe spaces are centralizers.

Kalton \cite{kaltmem,kaltdiff} developed a deep theory connecting centralizers and complex interpolation families of K\"othe spaces.
A critical point for us is that while complex interpolation is orientation-free, in the sense that, for instance, $[X,Y]_\theta = [X,Y]_{1-\theta}$, differentiation is not: if the differential of $(X,Y)$ at $ \theta$ is $\Omega$, the differential of $(Y, X)$ at $ 1- \theta$ is $-\Omega$. An important example will illustrate this: the interpolation couple $(\ell_\infty, \ell_1)$ has $[\ell_\infty, \ell_1]_{1/2}=\ell_2$; and since a homogeneous bounded selection for the evaluation map is $B(x)(z)= x^{2z}$ then $\Omega_{1/2}(x) = 2x\log x$ for $\|x\|=1$. Instead, the interpolation couple $(\ell_1, \ell_\infty)$ still yields $[\ell_1, \ell_\infty]_{1/2}=\ell_2$, although a homogeneous bounded selection for the evaluation map is now $B(x)(z)= x^{2(1-z)}$ and thus the differential is $\Omega_{1/2}(x) = -2x\log x$ for $\|x\|=1$. Throughout this paper we will call \emph{Kalton-Peck map} (at $\theta$) to  the differential generated by the couple $(\ell_\infty, \ell_1)$ at $\theta$, namely $\KP_{\theta^{-1}} x = \theta^{-1} x \log x $ for $\|x\|=1$. The associated derived space is the celebrated Kalton-Peck $Z_{\theta^{-1}}$ space \cite{kaltpeck}.

\section{Factorization and centralizers} Let $X,Y$ be two K\"othe spaces on the same base space and let $0<\theta<1$. We can form the following quasi Banach spaces:
\begin{itemize}
\item $X^\theta= \{ f\in L_0: |f|^{1/\theta} \in X\}$ endowed with the norm $\|x\|_\theta = \| |x|^{1/\theta}\|^\theta$.
\item $XY = \{ f \in L_0: f=xy \; \mathrm{for \; some}\; x\in X, y\in Y\}$
endowed with the quasinorm $\|f\|_{XY} = \inf \{ \|x\|_X\|y\|_Y : f=xy,\; x\in X, y\in Y\}$. According to \cite[Lemma 2]{cabepoint}, if $X,Y$ are Banach spaces then $XY$ is (isomorphic to) a $1/2$-Banach space.
\item If $X,Y$ are Banach spaces, the Banach space $X^{1-\theta}Y^{\theta}$ is therefore formed by the functions $f \in L_0$ such that $f=ab$ for some $a\in X^{1-\theta}, b\in Y^{\theta}\}$, so that $|f| = |x|^{1-\theta} |y|^{\theta}$ for $x=a^{1/ 1-\theta}$ and $y=b^{1/\theta}$. The space is thus endowed with the norm $\|f\|=\inf \{\|x\|_{X}^{1-\theta}\|y\|_{Y}^{\theta}: x\in X, y\in Y,
|f|=|x|^{1-\theta}|y|^{\theta}\}.$
\end{itemize}

Shestakov proved in \cite{sesta} that $[X_0,X_1]_\theta = \overline{ X_0\cap X_1} \subset X_0^{1-\theta}X_1^\theta$ and that there exist pairs $(X_0, X_1)$ of K\"othe spaces for which $X_0 \cap X_1$ is not dense in $X_0^{1-\theta}X_1^\theta$ and therefore $[X_0,X_1]_\theta\neq X_0^{1-\theta}X_1^\theta$.
The pair $(X_0, X_1)$ will be called \emph{regular}, according to \cite{cwikel} if $X_0\cap X_1$ is dense in both $X_0$ and $X_1$. Kalton \cite[formula (3.2)]{kaltdiff} already observed that complex interpolation for regular couples and factorization are the same, thanks to Lozanovskii decomposition formula (see \cite[Theorem 4.6]{kaltonmontgomery}); precisely,
the complex interpolation space $[X_0, X_1]_\theta$ is isometric to $X_0^{1-\theta} X_1^\theta$. The differential can be obtained
from the factorization approach as follows: observe first that, by homogeneity, when calculating
$\|x\|=\inf \{\|x_0\|_{X_0}^{1-\theta}\|x_1\|_{X_1}^{\theta}\}$ as above one may always assume that $\|x_0\|_{X_0}=\|x_1\|_{X_1}$. When $\|x_0\|_{X_0}, \|x_1\|_{X_1} \leq K\|x\|$ we shall say that
$|x|=|x_0|^{1-\theta}|x_1|^{\theta}$ is a $K$-optimal decomposition for $x$. If we do not need to specify $K$ we will just say an \emph{almost optimal} factorization. We construct now an associated homogeneous map as follows: for $K$ fixed, determine a $K$-optimal decomposition $a_0(x),a_1(x)$ for $x\in X_0^{1-\theta} X_1^\theta$ and set
\begin{equation}\label{factpair}
\mho_\theta(x) = x\, \log \frac{|a_1(x)|}{|a_0(x)|}.
\end{equation}
Let us show that $\mho_\theta$ can be seen as the associated differential $\Omega_\theta$: just observe that since $\|x\| = \|a_0(x)\|_{X_0}=\|a_1(x)\|_{X_1}$, one can obtain a $K$-bounded selection for $[X_0, X_1]_\theta$ by setting $B_\theta(x)(z) = |a_0(x)|^{1-z} |a_1(x)|^z$, which in turn yields
\begin{equation}\label{kpgeneralized}
\Omega_\theta(x) = \delta_\theta'B_\theta(x) = |a_0(x)|^{1-\theta} |a_1(x)|^{\theta}
\log \frac{|a_1(x)|}{|a_0(x)|} =x\, \log \frac{|a_1(x)|}{|a_0(x)|}.
\end{equation}

Centralizers obtained via factorization are sensible to orientation too, even if $X^{1-\theta}Y^\theta = Y^{1-(1-\theta)}X^{1-\theta}$. The form
in which differentials can be obtained from factorization in the case of finite families $X(1), \dots, X(n)$ of K\"othe spaces distributed in $n$ arcs $A_1, \dots, A_n$ on the unit sphere $\mathbb T=\partial \mathbb D$  was obtained in \cite{ccfg}. An important role will be played by the following functions: given a connected arc $A$ of the unit circle $\mathbb T$, we will denote $\varphi_{A}$ an analytic function on $\mathbb{D}$ such that $\mathrm{Re}\, \varphi_A = \chi_{A}$ on $\mathbb T$. Given  $z_0\in \D$, $\mathrm{Re}\, \varphi_j(z_0) = \mu_{z_0}(A_j)$ and one has:

\begin{proposition} Let $\mu_{z_0}$ be the harmonic measure on $\mathbb T$ with respect to $z_0$. Then
\begin{equation*}\label{fact}X_{z_0} = X(1)^{\mu_{z_0}(A_1)} X(2)^{\mu_{z_0}(A_2)} \cdots X(n)^{\mu_{z_0}(A_n)}\end{equation*}
with associated differential
\begin{equation}\label{diff}\Omega_{z_0}(x) = x\left ( \varphi_1'(z_0)\log a_1(x) +  \cdots  + \varphi_n'(z_0)\log a_n(x)\right)
\end{equation} where $x = a_1(x)^{\mu_{z_0}(A_1)} \cdots a_n(x)^{\mu_{z_0}(A_n)}$ is an almost optimal factorization of $x$ in the decomposition above. \end{proposition}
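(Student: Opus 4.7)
My plan is to mimic the two-space construction \eqref{kpgeneralized}: produce an explicit bounded homogeneous selection $B_{z_0}: X_{z_0} \to \mathcal{F}$ for the evaluation map, and read off the differential as its derivative at $z_0$.

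For the factorization identity, I would identify $X_{z_0}$ with the Calder\'on product $X(1)^{\mu_{z_0}(A_1)} \cdots X(n)^{\mu_{z_0}(A_n)}$ by iterating the Lozanovskii decomposition. Grouping the arcs into two subfamilies $\{A_1, \ldots, A_k\}$ and $\{A_{k+1}, \ldots, A_n\}$, the harmonic weights add across each subgroup, so each can be treated inductively as a single Calder\'on factor, and the two-space identity \cite[formula (3.2)]{kaltdiff} reduces the general $n$-factor case to $n=2$.

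For the selection, each $\varphi_j$ is determined only up to an imaginary additive constant; I use this freedom to impose $\varphi_j(z_0) = \mu_{z_0}(A_j) \in \mathbb{R}$. Given $x \in X_{z_0}$, pick an almost optimal positive factorization $|x| = a_1(x)^{\mu_{z_0}(A_1)} \cdots a_n(x)^{\mu_{z_0}(A_n)}$ normalized so that $\|a_1(x)\|_{X(1)} = \cdots = \|a_n(x)\|_{X(n)} \leq K\|x\|$, and set
\begin{equation*}
B_{z_0}(x)(z) = \mathrm{sign}(x)\, \exp\!\left( \sum_{j=1}^n \varphi_j(z) \log a_j(x) \right).
\end{equation*}
For each fixed point in the base space this is holomorphic in $z$. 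On the arc $A_j$ one has $\mathrm{Re}\,\varphi_j(\omega) = 1$ and $\mathrm{Re}\,\varphi_k(\omega) = 0$ for $k \neq j$, hence $|B_{z_0}(x)(\omega)| = a_j(x) \in X(j)$ with norm $\leq K\|x\|$; this places $B_{z_0}(x)$ in $\mathcal{F}$ with $\|B_{z_0}\| \leq K$. The normalization $\varphi_j(z_0) = \mu_{z_0}(A_j)$ yields $B_{z_0}(x)(z_0) = \mathrm{sign}(x)|x| = x$, so $B_{z_0}$ is indeed a bounded selection.

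Differentiating, and using that $\mathrm{sign}(x)$ is constant in $z$,
\begin{equation*}
\Omega_{z_0}(x) = B_{z_0}(x)'(z_0) = B_{z_0}(x)(z_0) \sum_{j=1}^n \varphi_j'(z_0) \log a_j(x) = x \sum_{j=1}^n \varphi_j'(z_0) \log a_j(x),
\end{equation*}
which is \eqref{diff}. The main obstacle is the first step: establishing the $n$-factor Calder\'on-product formula for $X_{z_0}$ requires an iterated Lozanovskii decomposition together with a check that regularity is preserved along the induction; arranging the normalization of the $\varphi_j$ to produce exactly $x$ (rather than $x$ times a phase) at $z_0$ is a small but essential point. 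Once these are in place, the rest is a holomorphic chain rule applied to an explicit selection, in direct analogy with the two-space case.
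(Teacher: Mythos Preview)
Your proposal is correct and follows essentially the same route as the paper: construct the explicit extremal $F(z)=a_1(x)^{\varphi_1(z)}\cdots a_n(x)^{\varphi_n(z)}$ from a $K$-optimal factorization, verify it is a $K$-extremal, and differentiate at $z_0$. You supply more detail than the paper's sketch (the normalization $\varphi_j(z_0)=\mu_{z_0}(A_j)$, the $\mathrm{sign}(x)$ factor, the inductive reduction of the factorization identity), but the argument is the same.
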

\begin{proof} Fix $K$ and check that if $x = a_1(x)^{\mu_{z_0}(A_1)} \cdots a_n(x)^{\mu_{z_0}(A_n)}$ is a $K$-optimal factorization of $x$ then $F(z)= a_1(x)^{\varphi_1(z)} \cdots a_n(x)^{\varphi_n(z)}$ is a $K$-extremal
at $z_0$ for $x$. Finally, compute $F'(z_0)$. \end{proof}

 Let us show now how to perform the passage from a strip configuration to a disk configuration:  let $(X_0, X_1)$ be an interpolation pair with interpolation space $[X_0, X_1]_{1/2}$ and differential $\Omega$:

\begin{lemma}\label{configuration}$\;$
\begin{itemize}
\item Consider the arcs $A_+=\lbrace e^{i\theta}: 0<\theta<\pi\rbrace$ and $A_-=\lbrace e^{i\theta}: \pi<\theta<2\pi\rbrace$ and set $X_0$ on $A_+$ and $X_1$ on $A_-$. Complex interpolation for this family at $z=0$ yields the space $[X_0, X_1]_{1/2}$ with differential $\Omega$.
\item The passage from $\Omega$ to $i\Omega$ is obtained by a rotation of $\pi/2$ to the right of the original configuration of the family. Precisely: Consider the arcs $A_{left}=\lbrace e^{i\theta}: \pi/2<\theta<3\pi/2\rbrace$ and $A_{right}=\lbrace e^{i\theta}: 3\pi/2<\theta<\pi/2\rbrace$ and set $X_0$ on $A_{left}$ and $X_1$ on $A_{right}$. Complex interpolation for this family at $z=0$ yields the space $[X_0, X_1]_{1/2}$ with differential $i\Omega$.\end{itemize}
\end{lemma}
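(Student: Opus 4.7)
The plan is to apply the Proposition above with $n=2$ arcs and base point $z_0=0$ to each of the two configurations, and to pin down the coefficients $\varphi_j'(0)$ explicitly. In both bullets the two arcs are complementary semicircles of $\T$ and, since $\mu_0$ is normalised Lebesgue measure on $\T$, each arc has harmonic measure $1/2$ at the origin. Thus in both cases the Proposition delivers the factorisation space $X_0^{1/2}X_1^{1/2}$, which coincides with $[X_0,X_1]_{1/2}$ by the Lozanovskii--Kalton identification recalled earlier; so the only non-trivial content is to read off the differential from formula~(\ref{diff}).

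For this I would use the Schwarz integral representation of $\varphi_A$, which after differentiating under the integral sign and setting $z=0$ collapses to
\[
\varphi_A'(0)=\frac{1}{\pi}\int_A e^{-i\theta}\,d\theta.
\]
Direct evaluation on each of the four arcs gives $\varphi_+'(0)=-2i/\pi$, $\varphi_-'(0)=2i/\pi$, $\varphi_{\mathrm{left}}'(0)=-2/\pi$ and $\varphi_{\mathrm{right}}'(0)=2/\pi$. Substituting into~(\ref{diff}) together with the $1/2$-optimal factorisation $|x|=|a(x)|^{1/2}|b(x)|^{1/2}$, with $a(x)\in X_0$ and $b(x)\in X_1$, one obtains for the first configuration
\[
\Omega_0(x)=\tfrac{2i}{\pi}\,x\log\tfrac{b(x)}{a(x)},
\]
which, up to the overall normalising constant, is the strip differential of $(X_0,X_1)$ at $1/2$ displayed in~(\ref{kpgeneralized}); and for the second configuration one gets $\tfrac{2}{\pi}\,x\log(b(x)/a(x))$, that is, the same expression multiplied by $1/i$.

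Conceptually, what drives this is the rotational covariance $\varphi_{e^{i\alpha}A}(z)=\varphi_A(e^{-i\alpha}z)$, so that a rotation of the whole boundary partition by the angle $\alpha$ multiplies every coefficient $\varphi_A'(0)$, and hence the whole differential, by $e^{-i\alpha}$; taking $\alpha=\pm\pi/2$ produces exactly the factor $\pm i$ promised in the statement. The one really delicate point is orientation bookkeeping: one has to verify that ``rotation of $\pi/2$ to the right'' is indeed the direction which transports the configuration $(X_0\text{ on }A_+,\,X_1\text{ on }A_-)$ to the configuration $(X_0\text{ on }A_{\mathrm{left}},\,X_1\text{ on }A_{\mathrm{right}})$ stated in the lemma, and to align this with the sign of $i$ coming out of the integrals.
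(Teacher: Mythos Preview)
The paper states Lemma~\ref{configuration} without proof, so there is nothing to compare your argument against; your approach via the Proposition (formula~(\ref{diff})) together with an explicit evaluation of $\varphi_A'(0)$ through the Schwarz kernel is the natural one, and your integrals are computed correctly.

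What you have actually shown, however, does not quite match the lemma as literally stated, and you should say so rather than leave it as a ``delicate point''. Your own numbers give, for the first configuration, the differential $\tfrac{2i}{\pi}\,x\log\tfrac{b(x)}{a(x)}=\tfrac{2i}{\pi}\,\Omega_{1/2}(x)$, which is not the strip differential $\Omega$ but a nonreal scalar multiple of it; and for the second configuration $\tfrac{2}{\pi}\,\Omega_{1/2}$, which is $i^{-1}=-i$ times the first, not $i$ times it. The source of the discrepancy is that the verbal description in the lemma (``rotation of $\pi/2$ to the right'', i.e.\ clockwise, $\alpha=-\pi/2$) and the explicit arcs it then lists ($X_0$ on $A_{\mathrm{left}}=e^{i\pi/2}A_+$, a \emph{counterclockwise} rotation) are inconsistent with one another. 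Under your correct covariance rule $\varphi_{e^{i\alpha}A}'(0)=e^{-i\alpha}\varphi_A'(0)$, a clockwise rotation by $\pi/2$ does multiply the differential by $i$, in agreement with the verbal description, while the stated arcs yield the factor $-i$. (The paper's later rotation formula, after $(\bigstar)$, is written with the opposite sign and is consistent with the lemma's explicit arcs but not with the direct integral computation.)

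In short: your method is right and your arithmetic is right; what it proves is the lemma only up to a nonzero scalar, i.e.\ up to projective equivalence, which is in fact all that is used later (Proposition~\ref{isum}(2) invokes the lemma only to obtain a projective identification). Make that conclusion explicit instead of hedging.
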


We connect now factorization and interpolation of finite families and sums of centralizers.

\begin{lemma}[Butterfly lemma]  Let $X_0,X_1$ and $Y_0, Y_1$ be two regular pairs of Köthe spaces on the same measure space. Fix $0<\eta, \theta<1$.
 Assume that $[X_0, Y_0]_\theta=[X_1, Y_1]_\theta$ and let $\Omega_0$ (resp. $\Omega_1$) be the associated derivations.
The derivation generated by the pair
$([X_0, X_1]_\eta, [Y_0, Y_1]_\eta) $ at $\theta$ is $(1-\eta) \Omega_0  + \eta \Omega_1.$\end{lemma}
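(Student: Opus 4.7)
The plan is to prove this entirely via Kalton's factorization description of complex interpolation, which applies because both input pairs are regular. Under this description, $[X_i,Y_i]_\theta = X_i^{1-\theta}Y_i^\theta$ and $[X_0,X_1]_\eta = X_0^{1-\eta}X_1^\eta$, $[Y_0,Y_1]_\eta = Y_0^{1-\eta}Y_1^\eta$ isometrically, and the differential of each pair is produced by formula (\ref{kpgeneralized}) from an almost optimal factorization. The whole argument will be a direct reshuffling of exponents inside these factorization identities.

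First I would fix a constant $K$ and, for $x$ in the common interpolation space $[X_0,Y_0]_\theta=[X_1,Y_1]_\theta$, pick $K$-optimal factorizations $|x|=|u_i|^{1-\theta}|v_i|^\theta$ with $u_i\in X_i$, $v_i\in Y_i$ for $i=0,1$. I would then build the \emph{diagonal} factorization $|A|=|u_0|^{1-\eta}|u_1|^\eta$, $|B|=|v_0|^{1-\eta}|v_1|^\eta$ (distributing the sign of $x$ between $A$ and $B$). By construction $A\in [X_0,X_1]_\eta$ and $B\in [Y_0,Y_1]_\eta$ with norms at most $K\|x\|$, and reshuffling exponents yields
\begin{equation*}
|A|^{1-\theta}|B|^\theta = \bigl(|u_0|^{1-\theta}|v_0|^\theta\bigr)^{1-\eta}\bigl(|u_1|^{1-\theta}|v_1|^\theta\bigr)^\eta = |x|^{1-\eta}|x|^\eta=|x|.
\end{equation*}
This proves $|x|=|A|^{1-\theta}|B|^\theta$ is an almost optimal factorization of $x$ in the outer couple $([X_0,X_1]_\eta,[Y_0,Y_1]_\eta)$; in passing it also shows $[[X_0,X_1]_\eta,[Y_0,Y_1]_\eta]_\theta = [X_0,Y_0]_\theta$ with equivalent norms, which is the Banach-space shell of the reiteration statement.

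Feeding this diagonal factorization into formula (\ref{kpgeneralized}) and using additivity of $\log$ on products gives
\begin{equation*}
\Omega(x) = x\,\log\frac{|B|}{|A|} = (1-\eta)\,x\log\frac{|v_0|}{|u_0|} + \eta\,x\log\frac{|v_1|}{|u_1|} = (1-\eta)\Omega_0(x) + \eta\Omega_1(x),
\end{equation*}
where the last equality is formula (\ref{kpgeneralized}) applied separately to the two inner pairs. The one point requiring care, and the one I would highlight explicitly, is the usual disclaimer that differentials are only defined up to bounded equivalence: different $K$-optimal selections on the three pairs involved need not literally agree with the diagonal choice made above, so the identity above is to be read modulo a bounded centralizer, with constant controlled uniformly by $K$ and the centralizer constants of $\Omega_0,\Omega_1$. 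Beyond this bookkeeping the proof is just the multiplicative accounting of the four exponents $(1-\eta)(1-\theta)$, $\eta(1-\theta)$, $(1-\eta)\theta$, $\eta\theta$ sitting at the four corners of an $\eta$--$\theta$ square, which (I suspect) is the source of the name.
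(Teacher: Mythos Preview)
Your proof is correct and follows essentially the same factorization approach as the paper: both arguments reduce to the four-exponent identity $X_0^{(1-\theta)(1-\eta)}X_1^{(1-\theta)\eta}Y_0^{\theta(1-\eta)}Y_1^{\theta\eta}$ and then split the logarithm in formula~(\ref{factpair}). The only cosmetic difference is the direction of the bookkeeping---the paper starts from the four-fold factorization and groups, while you start from the two pairwise factorizations and combine---and your explicit check that the diagonal $(A,B)$ is almost optimal, together with the bounded-equivalence disclaimer, is in fact slightly more careful than the paper's version. (As an aside, the name comes not from the $\eta$--$\theta$ square but from the butterfly-shaped diagram the paper draws with the four spaces at the wingtips.)
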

\begin{proof} Observe that $$ \left [ [X_0, Y_0]_\theta, [X_1, Y_1]_\theta\right]_\eta  = X_0^{(1-\theta)(1-\eta)}X_1^{(1-\theta)\eta}Y_0^{\theta(1-\eta)}Y_1^{\theta\eta}=\left[[X_0, X_1]_\eta, [Y_0, Y_1]_\eta\right]_\theta.$$ An optimal factorization for $f\in X$ is
$f= f_{X_0}^{(1-\theta)(1-\eta)}f_{X_1}^{(1-\theta)\eta}f_{Y_0}^{(1-\eta)\theta}f_{Y_1}^{\eta\theta}.$ Since $[X_0, Y_0]_\theta=[X_1, Y_1]_\theta$, we can compute the centralizer $\Omega$ associated at $\left [ [X_0, X_1]_\eta, [Y_0, Y_1]_\eta\right]_\theta$: according to formula (\ref{factpair}) we have:

\begin{eqnarray*}
\Omega(f)&=&f\log \frac{ f_{Y_0}^{(1-\eta)} f_{Y_1}^{\eta}}{f_{X_0}^{(1-\eta)} f_{X_1}^{\eta}}\\
&=& f\left( \log \frac{ f_{Y_0}^{(1-\eta)}} {f_{X_0}^{(1-\eta)}} + \log \frac{f_{Y_1}^{\eta}} {f_{X_1}^{\eta}}
\right)\\
&=& f\left((1-\eta) \log \frac{ f_{Y_0}} {f_{X_0}} + \eta \log \frac{f_{Y_1}} {f_{X_1}}
\right)\\
&=& (1-\eta) \Omega_0(f) + \eta \Omega_1(f).\qedhere
\end{eqnarray*}
\end{proof}
The Butterfly lemma can be considered a reiteration-like result for differentials. Its name is due to the following diagram, that yields an idea of how $\Omega$ is obtained:
$$\xymatrix{
X_1 \ar@{-}[ddrrrr]&&&& Y_0\\
[X_0, X_1]_\eta\ar@{-}[rrrr]&&\bullet&& [Y_0, Y_1]_\eta\\
X_0\ar@{-}@/^2pc/[uu] \ar@{-}[uurrrr]&&&&Y_1\ar@{-}@/_2pc/[uu]}$$

\begin{corollary}\label{corbutter} Let $(X_0, X_1)$ and $(Y_0, Y_1)$ be two regular interpolation pairs such that $[X_0, X_1]_{1/2} = [Y_0, Y_1]_{1/2}$ with associated centralizers $\Omega_X$ and $\Omega_Y$. Then $\Omega_X + \Omega_Y$ is projectively equivalent to the centralizer generated at $1/2$ by the pair $([X_0, Y_0]_{1/2}, [X_1, Y_1]_{1/2})$.\end{corollary}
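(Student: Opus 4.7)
The plan is to obtain the corollary as a direct application of the Butterfly lemma, after a suitable relabeling of its four K\"othe spaces and a specialization of both interpolation parameters to $1/2$.

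First, I would perform the "row/column" swap in the quadruple appearing in the Butterfly lemma. Using a tilde to denote the lemma's internal variables, I would set $\widetilde{X}_0 = X_0$, $\widetilde{X}_1 = Y_0$, $\widetilde{Y}_0 = X_1$ and $\widetilde{Y}_1 = Y_1$. Under these identifications, the hypothesis of the lemma, $[\widetilde{X}_0, \widetilde{Y}_0]_\theta = [\widetilde{X}_1, \widetilde{Y}_1]_\theta$, reads $[X_0, X_1]_\theta = [Y_0, Y_1]_\theta$, which at $\theta = 1/2$ is exactly the hypothesis of the corollary; and the corresponding derivations $\widetilde{\Omega}_0$ and $\widetilde{\Omega}_1$ are, by construction, the centralizers $\Omega_X$ (associated with $(X_0, X_1)$ at $1/2$) and $\Omega_Y$ (associated with $(Y_0, Y_1)$ at $1/2$).

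Next, I would read off the conclusion of the Butterfly lemma under this relabeling and specialize $\eta = 1/2$. The pair produced by the lemma is $([\widetilde{X}_0, \widetilde{X}_1]_{1/2}, [\widetilde{Y}_0, \widetilde{Y}_1]_{1/2})$, which under the identification is precisely $([X_0, Y_0]_{1/2}, [X_1, Y_1]_{1/2})$, and the lemma asserts that its derivation at $1/2$ equals $(1-1/2)\Omega_X + (1/2)\Omega_Y = \tfrac{1}{2}(\Omega_X + \Omega_Y)$. Since $\Omega_X + \Omega_Y$ is the scalar multiple by $2$ of this derivation, the two centralizers are projectively equivalent, as claimed.

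There is no substantive obstacle here; the work is entirely bookkeeping. The only point demanding care is that the Butterfly lemma's hypothesis and conclusion refer to two complementary ways of pairing the four spaces (this is precisely what the "butterfly" diagram encodes), so one must be careful not to confuse the pair whose derivations are the \emph{inputs} with the pair whose derivation is the \emph{output}. Once the right relabeling is made, the corollary follows from the lemma with no further computation.
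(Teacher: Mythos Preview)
Your proposal is correct and matches the paper's approach: the corollary is stated immediately after the Butterfly lemma with no separate proof, precisely because it is obtained by the relabeling and specialization $\eta=\theta=1/2$ you describe. Your care in distinguishing the input pairs from the output pair is exactly the bookkeeping the butterfly diagram is meant to encode.
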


One can always reduce the situation to $z_0=0$ using a suitable conformal mapping. In that case, if $X(1), \dots,  X(n)$ are distributed in $n$ arcs of lengths $l_j$ then $X_{0} = X(1)^{l_1} \cdots X(n)^{l_n}$ with associated differential
$$\Omega_{0}(x) =  x\left ( \varphi_1'(0)\log a_1(x) +  \cdots  + \varphi_n'(0)\log a_n(x)\right)$$
(the functions $\varphi_i$ reflect the position of the arcs). An important case is when the spaces are distributed in $n$ arcs of equal length $A_j=\lbrace e^{i\theta}: \frac{2\pi(j-1)}{n}<\theta<\frac{2\pi j}{n}\rbrace$ for $j=1,\dots,n$ and $z_0=0$, in which case $X_{0} = X(1)^{1/n} \cdot ... \cdot X(n)^{1/n}$ and associated differential as above. In this case, calling $\omega_j = e^{\frac{2\pi j}{n}i}$, $j=1,\dots,n-1$, the $n^{th}$ roots of unity then $\varphi_1(z)=\varphi_2(\omega_1 z)=\varphi_3(\omega_2 z) = \cdots = \varphi_{n}(\omega_{n-1} z)$ and thus $\Omega_0$ can be written as
$$
\Omega_0(x)= \varphi_n'(0)x\left(\sum_{j=1}^{n-1} \omega_{n-j}\log a_j(x)\right).
$$

Let us explain now how addition of differentials is reflected in the configuration of the spaces. From now on all pairs will be assumed to be regular without further mention.

\begin{proposition}\label{isum} Consider now two interpolation pairs: $(X_0,X_1)$ and $(Y_0, Y_1)$ so that $[X_0,X_1]_{1/2}= [Y_0, Y_1]_{1/2}$, the former with differential $\Omega_X$ and the latter with differential $\Omega_Y$. Consider the four arcs $A_{0}=\lbrace e^{i\theta}: 0<\theta<\pi/2\rbrace$, $A_{\pi/2} =\lbrace e^{i\theta}: \pi/2<\theta<\pi\rbrace$, $A_{\pi} =\lbrace e^{i\theta}: \pi<\theta<3\pi/2\rbrace$, $A_{3\pi/2}=\lbrace e^{i\theta}: 3\pi/2<\theta<2\pi\rbrace$, of equal length. One has
\begin{enumerate}
\item The centralizer $\Omega_X + i \Omega_Y$ is projectively equivalent to the centralizer obtained from the configuration: $Y_0$ on $A_{0}$, $X_0$ on $A_{\pi/2}$, $Y_1$ on $A_{\pi}$ and $X_1$ on $A_{3\pi/2}$. Interchanging $Y_0$ and $Y_1$ one generates $\Omega_X - i \Omega_Y$, and interchanging $X_0$ and $X_1$ one generates $-\Omega_X + i \Omega_Y$. Finally, exchanging \emph{both} $Y_0$ and $Y_1$, \emph{and} $X_0$ and $X_1$ one generates $- \Omega_X - i \Omega_Y$. Interchanging $X_0,X_1$ by $Y_0,Y_1$ one generates $i\Omega_X + \Omega_Y$.
\item The centralizer $\Omega_X + \Omega_Y$ is projectively equivalent to the centralizer obtained from the following configuration: set $[X_0, Y_0]_{1/2}$ on the arc $A_+$ and $[X_1, Y_1]_{1/2}$ on the arc $A_-$. \end{enumerate}
\end{proposition}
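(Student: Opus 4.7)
Part (2) is immediate: Corollary \ref{corbutter} says that the centralizer of the pair $([X_0,Y_0]_{1/2},[X_1,Y_1]_{1/2})$ at $1/2$ is projectively equivalent to $\Omega_X+\Omega_Y$, and the first bullet of Lemma \ref{configuration} then identifies this pair-centralizer with the one obtained from the $A_+/A_-$ placement.

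For part (1), the plan is to apply the four-arc formula of the Proposition preceding Lemma \ref{configuration} at $z_0=0$ with four arcs of equal length $\pi/2$. The factorization identity $Y_0^{1/4}X_0^{1/4}Y_1^{1/4}X_1^{1/4}=[[X_0,X_1]_{1/2},[Y_0,Y_1]_{1/2}]_{1/2}$ together with the hypothesis $[X_0,X_1]_{1/2}=[Y_0,Y_1]_{1/2}=:E$ makes this space coincide with $E$. For $x\in E$ I would combine the $K$-optimal factorizations $x=f_{X_0}^{1/2}f_{X_1}^{1/2}$ and $x=f_{Y_0}^{1/2}f_{Y_1}^{1/2}$ used to compute $\Omega_X$ and $\Omega_Y$ into the nested 4-factorization $x=f_{Y_0}^{1/4}f_{X_0}^{1/4}f_{Y_1}^{1/4}f_{X_1}^{1/4}$, which is almost optimal because $x=x^{1/2}x^{1/2}$ is trivially the optimal factorization in $[E,E]_{1/2}$. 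A short Herglotz calculation, or the rotational symmetry of the configuration, gives $\varphi_j'(0)=(-i)^j(1-i)/\pi$ for $j=0,1,2,3$, and plugging these coefficients together with the nested factorization into the derivation formula yields
\[
\Omega(x)=\frac{(1-i)x}{\pi}\left[\log\frac{f_{Y_0}}{f_{Y_1}}+i\log\frac{f_{X_1}}{f_{X_0}}\right]=\frac{(1-i)i}{\pi}\bigl(\Omega_X+i\Omega_Y\bigr)(x)=\frac{1+i}{\pi}\bigl(\Omega_X+i\Omega_Y\bigr)(x),
\]
which is the required projective equivalence.

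The remaining cases are the same calculation with a different permutation of the $a_j$'s: exchanging $Y_0\leftrightarrow Y_1$ flips the $Y$-logarithm and gives a scalar multiple of $\Omega_X-i\Omega_Y$; exchanging $X_0\leftrightarrow X_1$ gives $-\Omega_X+i\Omega_Y$; doing both gives $-\Omega_X-i\Omega_Y$; and interchanging the $X$- and $Y$-pairs (so the $X$-pair occupies the even positions and the $Y$-pair the odd ones) yields, after the same algebraic rearrangement, a multiple of $i\Omega_X+\Omega_Y$. The only real difficulty is arithmetic bookkeeping: one has to verify that the scalar in front, which in each case comes out to $\pm(1+i)/\pi$ or $\pm i(1+i)/\pi$, is a nonzero unit, so that projective equivalence is preserved throughout.
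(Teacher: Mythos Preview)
Your proof is correct and follows essentially the same route as the paper's. For part (1) both arguments plug the nested factorization $x=y_0^{1/4}x_0^{1/4}y_1^{1/4}x_1^{1/4}$ into the four--arc differential formula (\ref{diff}) at $z_0=0$ and regroup the logarithms into $\Omega_X+i\Omega_Y$; the paper leaves the scalar as $\varphi_{A_{3\pi/2}}'(0)$, while you compute it explicitly as $(1+i)/\pi$ via the Herglotz integral (and your justification that the nested factorization is almost optimal, via $x=x^{1/2}x^{1/2}$ in $[E,E]_{1/2}$, is a detail the paper omits). Part (2) is handled identically in both, by combining Corollary~\ref{corbutter} with Lemma~\ref{configuration}.
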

\begin{figure}[ht]
\begin{center}
\begin{tikzpicture}
   \draw (0,0) circle (2cm);     
   \draw (0,-1.8) -- (0,-2.2);   
   \draw (1.8,0) -- (2.2,0);     
   \draw (0,1.8) -- (0,2.2);     
   \draw (-1.8,0) -- (-2.2,0);   
   \draw (-2,2) node {$X_0$};
   \draw (2,-2) node {$X_1$};
   \draw (2,2) node {$Y_0$};
   \draw (-2,-2) node {$Y_1$};
\end{tikzpicture}
\caption{This configuration generates a centralizer projectively equivalent to $\Omega_X + i \Omega_Y$.}
\end{center}
\end{figure}
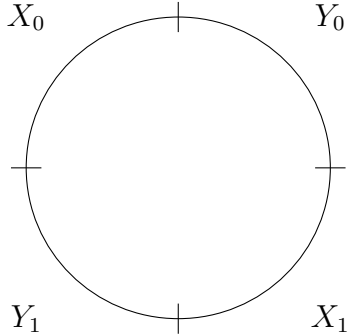

\begin{proof} To obtain (1) it is obviously enough to make the case $\Omega_X + i\Omega_Y$. Pick $x\in X_0^{1/2}X_1^{1/2}=Y_0^{1/2}Y_1^{1/2}$ and write  $x=x_0^{1/2}x_1^{1/2}=y_0^{1/2}y_1^{1/2}$. With the indicated distribution one obtains $X= X_0^{1/4}X_1^{1/4}Y_0^{1/4}Y_1^{1/4}$ so that if $x=x_0^{1/4}x_1^{1/4}y_0^{1/4}y_1^{1/4}$ is almost optimal then
\begin{eqnarray*}
\Omega(x)&=&\varphi_{A_{3\pi/2}}'(0)x\left(-i\log y_0-\log x_0+i\log y_1+\log x_1\right)\\
         &=&\varphi_{A_{3\pi/2}}'(0)x\left(\log \frac{x_1}{x_0}+i\log \frac{y_1}{y_0}\right)\\
         &=&\varphi_{A_{3\pi/2}}'(0)\left(\Omega_X+i\Omega_Y\right).\end{eqnarray*}
The proof of (2) follows from Lemma \ref{configuration} and Corollary \ref{corbutter}. \end{proof}

\section{Kalton's theorem revisited}

Kalton's obtained in \cite{kaltdiff} an outstanding theorem that establishes intimate relationships between centralizers and complex interpolation for families of K\"othe spaces. Recall that a centralizer $\Omega$ on a K\"othe space of functions $X$ is called \emph{real} if $\Omega(x)$ is real whenever $x\in X$ is real. We present a streamlined version according to \cite[Proposition 3.5]{cfg} where it is shown that some additional hypothesis Kalton includes are necessary only for existence, not for uniqueness:

\begin{theorem}\label{kalthm}\rm{[Kalton's theorem]}
\begin{itemize}
\item Given an interpolation pair $(X_0,X_1)$ of complex K\"othe spaces and $0<\theta<1$,
the derivation $\Omega_\theta$ is a real centralizer on $X_\theta$.
\item For every real centralizer $\Omega$ on a separable superreflexive K\"othe space $X$ satisfying some additional condition
there is a number $\varepsilon>0$ and an interpolation pair $(X_0, X_1)$ of K\"othe spaces
such that $X=X_\theta$ for some $0<\theta<1$ and $\varepsilon\Omega$ and $ \Omega_\theta$ are boundedly equivalent.
\item For any centralizer $\Omega$, real or not, satisfying the same additional conditions as before and defined on a separable superreflexive K\"othe space $X$ there is a number $\varepsilon>0$ and three K\"othe spaces $X(j)$, $j=1,2,3$ such that
if one considers the family $\mathcal X$ in which the three spaces are distributed in three arcs of equal length on the unit sphere
then $X=X_0$ and $\varepsilon\Omega$ and $\Omega_0$ are boundedly equivalent.
\end{itemize}
\end{theorem}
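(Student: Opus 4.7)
Parts (1) and (2) are Kalton's original theorem from \cite{kaltdiff}, with (2) streamlined as in \cite[Proposition 3.5]{cfg}; I only outline the approach. For (1), I would start from $\Omega_\theta = \delta_\theta' B_\theta$ and the $L_\infty$-module structure of $\mathcal{F}$: multiplying a $K$-extremal $B_\theta(x)$ by $f \in L_\infty$ yields an extremal for $fx$ up to a bounded error in $\mathcal{F}$, so differentiating at $\theta$ produces the commutator bound defining a centralizer. Reality follows by selecting extremals invariant under Schwarz reflection $z \mapsto \bar z$, which makes $B_\theta(x)'(\theta)$ real whenever $x$ is.

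For (2), I would follow Kalton's construction: encode $\Omega$ via a measurable function $u$ playing the role of a log-modulus, informally $u = \Omega(x)/x$ on the support of $x$ and extended globally by a density argument, and define $X_0, X_1$ as weighted copies of $X$ with norms $\|x\|_j = \|e^{(-1)^j u} x\|_X$ suitably truncated. The additional hypothesis (super-reflexivity plus lattice concavity) ensures that $X_0, X_1$ are separable Banach lattices forming a regular pair. By Lozanovskii, $[X_0, X_1]_{1/2} = X_0^{1/2} X_1^{1/2} = X$ with equal norms, and a $K$-optimal factorization $x = a_0^{1/2} a_1^{1/2}$ satisfies $\log(a_1/a_0) \approx 2u$; formula \eqref{kpgeneralized} then recovers $\Omega_{1/2}$ as a scalar multiple of $\Omega$ modulo a bounded remainder.

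For (3), the plan is to decompose $\Omega = \Omega_R + i\Omega_I$ into real and imaginary parts, which are themselves real centralizers on $X$ because both homogeneity and the commutator bound split over $\R$ and $i\R$. Applying (2) to each produces regular pairs $(X_0, X_1)$ and $(Y_0, Y_1)$ with common interpolation space $X$ at $1/2$ and with differentials boundedly equivalent to $\varepsilon \Omega_R$ and $\varepsilon \Omega_I$. To reach three arcs of equal length, I would use formula \eqref{diff} for three spaces $X(1), X(2), X(3)$ placed on equal arcs at $z_0 = 0$: with $\zeta = e^{2\pi i /3}$, symmetry forces
\begin{equation*}
\Omega_0(x) = c \cdot x \sum_{j=1}^{3} \zeta^{j-1} \log a_j(x),
\end{equation*}
a $\C$-linear combination of logarithms whose real and imaginary parts can be matched to $\Omega_R$ and $\Omega_I$ by taking the $X(j)$ to be weighted copies of $X$ with exponential weights tuned from the data supplied by (2); the matching is feasible because $\{1, \zeta, \zeta^2\}$ spans $\C$ over $\R$.

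The main obstacle is this last matching step: packaging real-centralizer data, which naturally furnishes four spaces, into three whose three-arc differential reproduces $\Omega$ up to bounded equivalence. A cleaner alternative is to first verify, via the Butterfly lemma and Proposition~\ref{isum}(1), that the four-arc configuration $(Y_0, X_0, Y_1, X_1)$ already generates $\varepsilon(\Omega_R + i\Omega_I)$, and then invoke the identity $1 + \zeta + \zeta^2 = 0$ together with the bounded triviality of the Kalton--Peck term $x \log x$ on the superreflexive space $X$ to collapse that four-arc configuration onto the required three-arc one.
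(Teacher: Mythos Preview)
The paper does not supply its own proof of this theorem: it is stated as Kalton's result with citations to \cite{kaltdiff} and \cite[Proposition 3.5]{cfg}, and is then used as an input to the paper's subsequent arguments. So there is no paper proof to compare against; your sketches of (1) and (2) are reasonable summaries of Kalton's original strategy.

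Your argument for (3), however, has a genuine gap. In the final paragraph you propose to pass from the four-arc configuration $(Y_0,X_0,Y_1,X_1)$ to a three-arc one by invoking ``the bounded triviality of the Kalton--Peck term $x\log x$ on the superreflexive space $X$''. This is false: the Kalton--Peck map $\KP_2$ is the archetypal \emph{nontrivial} centralizer, and the paper itself relies on that nontriviality (see Example~\ref{ejemplorentz} and the discussion around formula~\eqref{desdif}). So the collapse from four spaces to three cannot be achieved this way. More broadly, the paper's own logic runs in the opposite direction from what you propose: it takes Kalton's three-space result as given and then argues, via the Butterfly lemma and Proposition~\ref{isum}, that the four-space formulation $(\bigstar)$ is the natural one. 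It never claims, and does not need, a mechanism for reducing four arcs to three; Kalton's original proof of (3) in \cite{kaltdiff} builds the three spaces directly from the indicator functional $\Phi^\Omega$, not by first producing four and then merging.
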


There are a few issues regarding Kalton's theorem that deserve consideration:
\begin{enumerate}
\item  The role of \emph{bounded equivalence}. In \cite{ccfg} it was shown that the differential $\Omega_\theta$ generated at $\theta$ by a pair of superreflexive K\"othe spaces is trivial if and only if there is a weight $w$ so that $X_0=X_1(w)$ up to
an equivalent renorming. Thus, up to a renorming, Kalton's theorem is valid for classical equivalence.
\item The strong stability for pairs of the theorem remains valid for families of up to three K\"othe spaces distributed in
three arcs of the unit circle but fails for families of four K\"othe
spaces \cite{ccfg}. The stability also fails \cite{qiu} for families of three spaces under arbitrary rearrangements.
\item Correa presented in \cite{correa} different examples of centralizers obtained from families of three K\"othe spaces that cannot be obtained from families of two K\"othe spaces. \end{enumerate}

Let re-examine (2) and (3). We begin showing how the Butterfly provides a clean proof \cite[Theorem 7.2 (i)]{kaltdiff}, which be instrumental in the proof of Kalton's theorem. Let $X$ be a K\"othe space. Lozanovskii's factorization yields $L_1=X\cdot X^*$. The indicator function of $X$ is defined in \cite[Section 5]{kaltdiff} as $\Phi_X(f)=f\log x$, whenever $f=x\cdot x^*$ is an almost optimal factorization. Moreover, given a centralizer $\Omega$ on $X$, its \emph{upload} to $L_1$ is defined in \cite[Theorema 5.1]{kaltmem} as $\Omega^{[1]}(f)=\Omega(x)x^*$. Theorem 7.2 establishes that if $X_0, X_1$ are K\"othe spaces and $X=[X_0, X_1]_\theta$ with induced centralizer $\Omega_\theta$ then for $f\in L_1$ almost optimally factorized as $|f|= x_0x_0^* = x_1 x_1^*$
$$\Omega^{[1]}(f)= f\left(\log x_1 - \log x_0\right)$$

This is an immediate consequence of the situation described in the diagram
$$\xymatrix{
X_1 \ar@{-}[ddrrrr]&&&& X_0^*\\
[X_0, X_1]_\theta\ar@{-}[rrrr]&&\bullet&& [X_0, Y_1]_\theta^*\\
X_0\ar@{-}@/^2pc/[uu] \ar@{-}[uurrrr]&&&&X_1^*\ar@{-}@/_2pc/[uu]}$$

Indeed, set  $f=xx^*$ an almost optimal factorization of in $L_1=X X^*$ so that $f = xx^* = x_0^{1-\theta}x_1^\theta {x_0^*}^{1-\theta}{x_1^*}^\theta$ and then
\begin{eqnarray*}
\Omega^{[1]}(f) &=&  \Omega \left( x_0^{1-\theta}x_1^\theta\right)(x_0^*)^{1-\theta}(x_1^*)^\theta\\
&=& x \log \frac{x_1}{x_0} (x_0^*)^{1-\theta}(x_1^*)^{\theta}\\
&=& f\left(\log x_1 - \log x_0\right).\end{eqnarray*}

After that, the indicator of $\Omega$ is defined in \cite[Section 7]{kaltdiff} by $
\Phi^{\Omega}(f)=\int \Omega^{[1]}(f)\; d\mu.$ Assume now one has a family of four spaces $A, B, C, D$ equidistributed in the four arcs $A_{0}$, $A_{\pi/2}$, $A_{\pi}$ and $A_{3\pi/2}$ of Proposition \ref{isum}. The interpolated space at $z=0$ is $X=A^{1/4}B^{1/4}C^{1/4}D^{1/4}$, and it is then immediate that
\begin{eqnarray*}
\Phi^{\Omega}(f)&=&\int_0^{2\pi}e^{-it}\Phi_{X_{e^{it}}}(f)\; \frac{dt}{2\pi}\\
                &=&\int_0^{\pi/2}e^{-it}\Phi_A(f)\; \frac{dt}{2\pi}+\int_{\pi/2}^{\pi}e^{-it}\Phi_B(f)\; \frac{dt}{2\pi}+\\
                & &\qquad+\int_{\pi}^{3\pi/2}e^{-it}\Phi_C(f)\; \frac{dt}{2\pi}+\int_{3\pi/2}^{2\pi}e^{-it}\Phi_D(f)\; \frac{dt}{2\pi}\\
                &=&\dfrac{i}{2\pi}\big((-i-1)\Phi_A(f)+(-1+i)\Phi_B(f)+\\
                & &\qquad+(i+1)\Phi_C(f)+(1-i)\Phi_D(f)\big)\\
                &=&\dfrac{1+i}{2\pi}\big(-i\Phi_A(f)-1\Phi_B(f)+i\Phi_C(f)+\Phi_D(f)\big)
\end{eqnarray*}
(see also \cite[Prop.7.4]{kaltdiff}). Consider the pairs $(B,D)$ and $(A,C)$ with associated real centralizers at $0$ given by
$\Omega_B^D$ and $\Omega_A^C$. Then $\Phi^{\Omega_B^D} = \Phi_D - \Phi_B$ and $\Phi^{\Omega_A^C}= \Phi_C - \Phi_A$ and thus
$$\Phi^{\Omega}(f) = \dfrac{1+i}{2\pi}\big(\Phi^{\Omega_B^D}(f)+i\Phi^{\Omega_A^C}(f)\big)$$

This suggests the following reformulation of Kalton's theorem for complex interpolators:\\

$(\bigstar)$ Given a centralizer $\Omega$ on a K\"othe space $X$ there are four K\"othe spaces $X_0,X_1,Y_0,Y_1$ so that $\Omega$ is projectively equivalent to the centralizer generated at $0$ by the family formed by those spaces equidistributed in four arcs.\\

More precisely, let $ \Omega$ be a centralizer obtained at $0$ from a suitable interpolation family $(X_\omega)_{\omega\in \mathbb T}$ of K\"othe spaces. Then $\Omega$ is boundedly equivalent \cite{kaltdiff} to a centralizer having the form $\Omega_X + i\Omega_Y$ for two real centralizers $\Omega_X$ and $\Omega_Y$. By Kalton's theorem, there are interpolation pairs $(X_0, X_1)$ and $(Y_0, Y_1)$ such that $[X_0,X_1]_{1/2}= [Y_0,Y_1]_{1/2}$,
and such that  $\Omega_X$ is obtained from $(X_0, X_1)$ and $\Omega_Y$ is obtained from $(Y_0,Y_1)$. Proposition \ref{isum} yields the configuration that generates $\varphi_{A_{3\pi/2}}'(0)\left( \Omega_X + i\Omega_Y\right)$, which is projectively equivalent to $\Omega_X + i\Omega_Y$. Observe that if we rotate the arcs an angle $\alpha$, i.e. $\bar{A_j}=e^{\alpha i}A_j$, one obtain $\bar{\varphi_j}(z)=\varphi_j(e^{\alpha i}z)$ by the uniqueness of the functions $\varphi_j$, so its derivatives are $\bar{\varphi_j}'(z)=e^{\alpha i}{\varphi_j}'(e^{\alpha i}z)$ and the derivation is $\bar{\Omega}=e^{\alpha i}\Omega$ because of $\bar{\varphi_j}'(0)=e^{\alpha i}\varphi_j'(0)$. Thus, to obtain a clean equivalence one only has to rotate (counterclockwisely) in an angle $\alpha$ so that $e^{\alpha i}= |\varphi_{A_{3\pi/2}}'(0)| \varphi_{A_{3\pi/2}}'(0)^{-1}$ and replace the spaces $X_0, X_1$ (resp. ($Y_0, Y_1$)) by the spaces of their same scale $\bar{X_0}, \bar{X_1}$ (resp. ($\bar{Y_0}, \bar{Y_1}$)) that generate $|\varphi_{A_{3\pi/2}}'(0)|^{-1} \Omega_X$ (resp. $|\varphi_{A_{3\pi/2}}'(0)|^{-1} \Omega_Y$).\\

It is therefore superfluous to present a factorization theorem for more than four spaces, which explains why we focused on that case. Another issue that deserves attention is which centralizers cannot be obtained from families of two spaces. Correa \cite{correa} claims that ``in the search for such an example one feels tempted to use scales of $\ell_p$ spaces or $\ell_{p,q}$; however, reiteration results show that this approach is bound to fail." That is the reason why Correa constructs a sophisticated example using Orlicz spaces. Once again, using four spaces greatly simplifies the task. The ingredients we need are:
 \begin{itemize}
 \item \cite[Lemma 3.1]{correa}  If two nontrivial real centralizers $\Phi, \Psi$ are non-projectively equivalent then $\Phi + i\Psi$ is not projectively equivalent to a real centralizer .
 \item \cite[Example 1]{cabefac} The centralizer associated to the pair $(\ell_{p,2}, \ell_{p^*,2})$ is projectively equivalent to Kalton's centralizer $\mathcal K_2$ \cite{kaltcompo}.
 \item The centralizer associated to the pair $(\ell_{p}, \ell_{p^*})$ is projectively equivalent to Kalton-Peck's centralizer $\KP_2$ \cite{kaltmem,kaltdiff}.
\item \cite[Proposition 2]{cabefac} $\KP_2$  and $\mathcal {K}_2$ are not equivalent.
\end{itemize}

We are ready for the

\begin{example}\label{ejemplorentz} Consider the four arcs $A_0, A_{\pi/2}, A_{\pi}, A_{3\pi/2}$ from Proposition \ref{isum} and, for $1<p<\infty$,  set $X_\omega = \ell_{p,2}$ for $\omega\in A_0$, $X_\omega = \ell_p$ for $\omega\in A_{\pi/2}$, $X_\omega = \ell_{p^*,2}$ for $\omega\in A_{\pi}$ and $X_\omega = \ell_{p^*}$ for $\omega \in A_{3\pi/2}$. Interpolation space at $z=0$ yields the space
$$
\ell_2 = \ell_2^{\frac{1}{2}} \; \ell_{2}^{\frac{1}{2}}  = \ell_p^{\frac{1}{4}}\; \ell_{p^*}^{\frac{1}{4}} \; \ell_{p,2}^{\frac{1}{4}}\; \ell_{p^*,2}^{\frac{1}{4}}$$
with associated centralizer $\Omega$ projectively equivalent to $u\KP_2+ i v\mathcal{K}_2$ for two positive constants $u, v$. Since both $\KP_2$ and  $\mathcal{K}_2$ are nontrivial, and they are not projectively equivalent, $\Omega$ is not projectively equivalent to a real centralizer, hence it cannot be obtained from a family of two spaces.

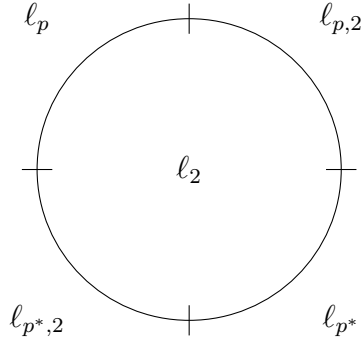
\begin{figure}[ht]
\begin{center}
\begin{tikzpicture}
   \draw (0,0) circle (2cm);     
   \draw (0,-1.8) -- (0,-2.2);   
   \draw (1.8,0) -- (2.2,0);     
   \draw (0,1.8) -- (0,2.2);     
   \draw (-1.8,0) -- (-2.2,0);   
   \draw (-2,2) node {$\ell_p$};
   \draw (2,-2) node {$\ell_{p^*}$};
   \draw (2,2) node {$\ell_{p,2}$};
   \draw (-2,-2) node {$\ell_{p^*,2}$};
   \draw (0,0) node {$\ell_2$};
\end{tikzpicture}
\caption{Configuration of Example \ref{ejemplorentz}.}
\end{center}
\end{figure}

\end{example}

\section{Twisted Hilbert spaces} We will consider now the most important case: when interpolated spaces are Hilbert spaces. Let $X$ be a K\"othe space such that $[X, X^*]_{1/2}=L_2$ with associated centralizer $\Omega_X$ and consider the scale $(L_\infty,L_1)$. The couple $[X, L_\infty]_\theta = X_\theta$ yields the $\theta^{-1}$-convexification of $X$. If one considers now the scale $(X_\theta,X_\theta^*)$ with associated centralizer $\Omega_{X_\theta}$ at $1/2$, we have
$$[X_\theta, X_\theta^*]_{1/2} = [[X,L_\infty]_\theta, [X^*, L_1]_\theta ]_{1/2} = [[X, X^*]_{1/2}, [L_\infty, L_1]_{1/2}]_\theta.$$

Since the centralizer associated to $[L_\infty, L_1]_{1/2}$ is $\KP_{2}$, one has:

\begin{proposition} Let $X$ be a K\"othe space such that $[X, X^*]_{1/2}=L_2$ with associated centralizer $\Omega_X$. The the centralizer generated at $[X_\theta, X_\theta^*]_{1/2}$ by the $\theta^{-1}$-convexification $X_\theta$ of $X$ is
\begin{equation} \label{desdif}
\Omega_{X_\theta} = (1-\theta) \Omega_X + \theta \KP_{2}.
\end{equation}
\end{proposition}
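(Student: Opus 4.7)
The plan is to derive this proposition as an immediate instance of the Butterfly lemma applied to the two regular pairs $(X, X^*)$ and $(L_\infty, L_1)$, whose common feature is that they both interpolate to $L_2$ at parameter $1/2$.

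First I would align the notation with that of the Butterfly lemma. Take the lemma's four spaces to be $X_0 = X$, $Y_0 = X^*$, $X_1 = L_\infty$, $Y_1 = L_1$, the lemma's inner parameter $\theta$ to be $1/2$, and the lemma's outer parameter $\eta$ to be the $\theta$ of the current proposition. The two ``vertical'' pairs in the butterfly are then $(X, X^*)$ and $(L_\infty, L_1)$; both interpolate at $1/2$ to $L_2$, so the hypothesis $[X_0, Y_0]_{1/2} = [X_1, Y_1]_{1/2}$ is met. The associated derivations at $1/2$ are $\Omega_X$ by assumption and $\KP_2$ as recalled immediately before the statement.

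Next I would read off the ``horizontal'' pairs. The pair $(X_0, X_1) = (X, L_\infty)$ interpolates at $\eta = \theta$ to $X_\theta$, by definition of the $\theta^{-1}$-convexification; the pair $(Y_0, Y_1) = (X^*, L_1)$ interpolates at $\eta = \theta$ to $X_\theta^*$, via the standard duality of complex interpolation for regular K\"othe couples. The Butterfly lemma then identifies the derivation of $([X_0, X_1]_\eta, [Y_0, Y_1]_\eta) = (X_\theta, X_\theta^*)$ at $\theta = 1/2$ as $(1-\eta)\Omega_0 + \eta\Omega_1 = (1-\theta)\Omega_X + \theta\KP_2$, which is exactly the claim.

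The only mildly delicate point is the identification $X_\theta^* = [X^*, L_1]_\theta$, which rests on the classical duality theorem for complex interpolation of regular K\"othe couples; under the paper's standing regularity (and separability/superreflexivity) assumptions this is costless. Apart from this, the proposition is essentially a one-step corollary of the Butterfly lemma, obtained simply by recognising that the two pairs $(X, X^*)$ and $(L_\infty, L_1)$ meet at $L_2$ in the middle and that reading the butterfly from the other axis produces precisely the convex combination $(1-\theta)\Omega_X + \theta\KP_2$.
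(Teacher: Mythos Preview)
Your proposal is correct and follows exactly the paper's approach: the proposition is stated immediately after the reiteration identity $[X_\theta, X_\theta^*]_{1/2} = [[X,L_\infty]_\theta, [X^*, L_1]_\theta ]_{1/2} = [[X, X^*]_{1/2}, [L_\infty, L_1]_{1/2}]_\theta$ and is simply the Butterfly lemma applied to the pairs $(X, X^*)$ and $(L_\infty, L_1)$ with inner parameter $1/2$ and outer parameter $\theta$, just as you describe. The duality identification $[X^*, L_1]_\theta = X_\theta^*$ you flag is indeed used (implicitly) by the paper as well.
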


A clear counter-intuitive implication is that $\Omega_X$ and $\Omega_{X_\theta}$ cannot be simultaneously trivial. On the other extreme of triviality we encounter singularity: a quasilinear map $F$ is called singular if its restriction to any infinite dimensional subspace is not trivial, and corresponds to the property: the quotient map in the exact sequence $F$ generates is a strictly singular operator. Singular quasilinear maps behave differently from strictly singular operators: if $F,G$ are strictly singular operators then $F+G$ is strictly singular too. However, if $F,G$ are singular quasilinear maps, $F+G$ can be ($F+F=2F$) or not ($F-F=0$) singular. In \cite{newderivation} it is introduced the notion of \emph{strictly non-singular} quasilinear map $F$: one such that every infinite dimensional subspace contains a further infinite dimensional subspace on which $F$ is trivial. It is clear that if $F$ is singular and $G$ is strictly non-singular then $F+G$ is singular. One has

\begin{corollary} If $\Omega_{X}$ is strictly non-singular then, for all $0<\theta<1$, the map $\Omega_{X_\theta}$ is  singular.\end{corollary}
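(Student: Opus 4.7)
The plan is to invoke the formula
\begin{equation*}
\Omega_{X_\theta} = (1-\theta)\,\Omega_X + \theta\,\KP_{2}
\end{equation*}
from the preceding proposition and then read off singularity from the combination principle stated in the paragraph just above the corollary. Concretely, I would split $\Omega_{X_\theta}$ as the sum of $F := \theta\,\KP_{2}$ and $G := (1-\theta)\,\Omega_X$ and check that $F$ is singular while $G$ is strictly non-singular; then $F+G$ is singular by the observation already recorded in the text.

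First I would recall that the Kalton--Peck map $\KP_2$ on $\ell_2$ is singular: its associated exact sequence is the original Kalton--Peck sequence \cite{kaltpeck}, whose quotient map is strictly singular, and this is precisely the quasilinear reformulation of singularity used in the excerpt. Multiplying by the positive scalar $\theta$ does not affect this property (nontriviality on a subspace is a homogeneous condition), so $F = \theta\,\KP_{2}$ is singular. Analogously, scalar multiplication by $(1-\theta)>0$ preserves strict non-singularity, so the hypothesis on $\Omega_X$ transfers verbatim to $G = (1-\theta)\,\Omega_X$.

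Now fix any infinite dimensional subspace $Y$ of $[X_\theta,X_\theta^*]_{1/2}=\ell_2$. By strict non-singularity of $G$, there is an infinite dimensional $Z\subset Y$ on which $G$ is trivial, i.e.\ $G|_Z$ differs from a linear map by a bounded map. Hence
\begin{equation*}
\Omega_{X_\theta}|_Z \;=\; F|_Z + G|_Z
\end{equation*}
is trivial if and only if $F|_Z$ is trivial. But $F$ is singular, so $F|_Z$ is nontrivial, and therefore $\Omega_{X_\theta}|_Z$ is nontrivial. Since $Y$ was arbitrary, no infinite dimensional subspace makes $\Omega_{X_\theta}$ trivial, i.e.\ $\Omega_{X_\theta}$ is singular.

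The only genuine point one has to be careful about is the additivity of triviality modulo bounded maps, which is the reason the combination principle works; this is straightforward from the definition of triviality (linear plus bounded) since linear maps and bounded homogeneous maps each form a vector space under pointwise operations. So the main conceptual ingredient is really the formula \eqref{desdif}, and everything else is a one-line application of the singular $+$ strictly non-singular $=$ singular rule.
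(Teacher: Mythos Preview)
Your proposal is correct and follows exactly the route the paper intends: the corollary is stated without proof precisely because it is meant to be an immediate consequence of formula~(\ref{desdif}) together with the observation in the preceding paragraph that a singular map plus a strictly non-singular map is singular, using that $\KP_2$ is singular. Your write-up simply makes these implicit steps explicit (one minor remark: the ambient space in this section is $L_2$ rather than $\ell_2$, so the singularity of $\KP_2$ should be invoked in that setting).
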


The Butterfly lemma also yields the identity $\KP_{2} = \frac{1}{\theta} \Omega_{X_{\theta}} - \frac{1-\theta}{\theta}\Omega_X$
from where it follows that, given two K\"othe spaces $X,Y$ as above, one has $\Omega_{X_{\theta}} - \Omega_{Y_{\theta}} = (1-\theta)\left( \Omega_X  - \Omega_Y\right)$. Therefore,

\begin{corollary} If $\Omega_{X}$ and $\Omega_Y$ are (boundedly) equivalent then $\Omega_{X_\theta}$ and $\Omega_{Y_\theta} $ are (boundedly) equivalent.\end{corollary}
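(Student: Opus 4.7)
My plan is to read the corollary as essentially a corollary of the identity
\[
\Omega_{X_{\theta}} - \Omega_{Y_{\theta}} = (1-\theta)\bigl(\Omega_X - \Omega_Y\bigr)
\]
which has just been derived from the Butterfly lemma in the paragraph preceding the statement. Once this identity is in hand, the corollary is reduced to the purely algebraic observation that the class of (boundedly) trivial centralizers is stable under scalar multiplication. So the plan splits into two very short steps.

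First, I would briefly recall the identity. Applying the previous proposition to both $X$ and $Y$ gives $\Omega_{X_\theta}=(1-\theta)\Omega_X+\theta\KP_2$ and $\Omega_{Y_\theta}=(1-\theta)\Omega_Y+\theta\KP_2$; subtracting cancels the common Kalton--Peck term and leaves the displayed identity. This step is already written out in the text just above the corollary, so I would only cite it.

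Second, assume $\Omega_X$ and $\Omega_Y$ are (boundedly) equivalent, i.e. $\Omega_X-\Omega_Y$ is (boundedly) trivial: there is a linear map $L:X\to L_0(\mu)$ such that $\Omega_X-\Omega_Y-L$ is bounded $X\to X$ (respectively, with the stronger bounded-triviality notion in the parenthetical version). Multiplying by the scalar $1-\theta$, the map $(1-\theta)L$ is again linear $X\to L_0(\mu)$ and the remainder $(1-\theta)(\Omega_X-\Omega_Y)-(1-\theta)L=(1-\theta)\bigl((\Omega_X-\Omega_Y)-L\bigr)$ is again bounded $X\to X$. Hence $(1-\theta)(\Omega_X-\Omega_Y)$ is (boundedly) trivial, and via the identity of the first step this is exactly $\Omega_{X_\theta}-\Omega_{Y_\theta}$, giving the desired (bounded) equivalence.

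There is no real obstacle: the only place one must be a little careful is the parenthetical ``bounded'' version, where one must check that the same scalar-multiplication argument goes through with whichever convention for bounded-triviality is in force; in both standard conventions the class is a vector space and hence closed under scalar multiples, so the argument is uniform. The substance of the corollary lies entirely in the Butterfly lemma and in the convexification identity already recorded above; the corollary itself is just the observation that the mapping $\Omega_X\mapsto \Omega_{X_\theta}$ is affine in $\Omega_X$ with slope $1-\theta$, and therefore sends (boundedly) equivalent centralizers to (boundedly) equivalent centralizers.
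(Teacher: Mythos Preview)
Your proposal is correct and follows exactly the paper's approach: the corollary is stated immediately after the identity $\Omega_{X_\theta}-\Omega_{Y_\theta}=(1-\theta)(\Omega_X-\Omega_Y)$ and is meant to be read as a direct consequence of it, with no further proof given. Your write-up simply makes explicit the trivial observation that (bounded) triviality is preserved under scalar multiplication; the only cosmetic slip is that the common domain of all these centralizers is $L_2$, not $X$.
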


and an extension/variation of \cite[Proposition 4.5]{ccfg}

\begin{corollary} $\Omega_X$ is trivial if and only if $X$ is a weighted $L_2(w)$ space.
\end{corollary}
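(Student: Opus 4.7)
The plan is to prove both implications separately. Sufficiency is a direct computation via the factorization formula (\ref{factpair}); necessity will rest on the triviality criterion from \cite{ccfg} quoted in issue (1) after Theorem \ref{kalthm}, together with a short weighted Hölder argument that upgrades the relation $X=X^*(w)$ to the identification $X=L_2(w)$.

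For sufficiency, assume $X=L_2(w)$ up to equivalent renorming, so $X^*=L_2(w^{-1})$. For $x\in L_2$ take the candidate decomposition $a_0(x)=|x|w^{-1/2}$ and $a_1(x)=|x|w^{1/2}$: it satisfies $|x|=|a_0|^{1/2}|a_1|^{1/2}$, and the quick norm check $\|a_0\|_X=\|x\|_2=\|a_1\|_{X^*}$ shows it is an almost optimal factorization. Substituting into (\ref{factpair}) yields $\Omega_X(x)=x\log(|a_1|/|a_0|)=x\log w$, which is pointwise multiplication by the fixed measurable function $\log w$, hence a linear map $X\to L_0$. Thus $\Omega_X$ is trivial (in fact, equal to a linear map).

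For necessity, assume $\Omega_X$ is trivial and apply the \cite{ccfg} criterion to the regular superreflexive pair $(X,X^*)$ at $\theta=1/2$: this furnishes a weight $w$ with $X=X^*(w)$ up to equivalent renorming, i.e. $\|fw\|_{X^*}\asymp\|f\|_X$. The duality bound $\int f(fw)\le\|f\|_X\|fw\|_{X^*}$ then gives $\int|f|^2w\lesssim\|f\|_X^2$; the dual relation $X^*=X(w^{-1})$ analogously yields $\int|g|^2w^{-1}\lesssim\|g\|_{X^*}^2$. Substituting both estimates into the weighted Cauchy--Schwarz inequality $|\int fg|\le(\int|f|^2w)^{1/2}(\int|g|^2w^{-1})^{1/2}$ and taking the supremum over the unit ball of $X^*$ produces the reverse inequality $\|f\|_X\lesssim(\int|f|^2w)^{1/2}$, so $\|f\|_X\asymp(\int|f|^2w)^{1/2}$, which is exactly $X=L_2(w)$ up to equivalent renorming.

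The only real obstacle is bookkeeping: one must track the weight $w$ through the equivalence $X=X^*(w)\Leftrightarrow X^*=X(w^{-1})$ and verify that the factorization used in the sufficiency step is genuinely almost optimal (not merely a valid decomposition) so that (\ref{factpair}) may be applied. Aside from this, no machinery beyond the \cite{ccfg} criterion and weighted Cauchy--Schwarz is required.
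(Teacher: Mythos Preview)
Your proof is correct but takes a genuinely different route from the paper's. For the necessity direction, the paper does not invoke the \cite{ccfg} triviality criterion directly on the pair $(X,X^*)$; instead it exploits the Butterfly identity~(\ref{desdif}), $\Omega_{X_\theta}=(1-\theta)\Omega_X+\theta\KP_2$: if $\Omega_X$ is trivial then $\Omega_{X_\theta}$ is equivalent to $\theta\KP_2$, so by the uniqueness part of Kalton's theorem $X_\theta=L_{2/\theta}$, and then \cite[Proposition~4.2]{ccfg} identifies $X$ as a weighted $L_2$. Your argument is more elementary in that it bypasses both the Butterfly lemma and Kalton's theorem, replacing them with the short weighted Cauchy--Schwarz step; on the other hand it imports the \cite{ccfg} criterion as a black box and misses the chance to illustrate the paper's main machinery. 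You also spell out the easy sufficiency direction explicitly, which the paper leaves implicit. Both approaches tacitly need enough reflexivity: yours for the duality identity $\|f\|_X=\sup_{\|g\|_{X^*}\le 1}\int|fg|$ used in the last step, the paper's for Kalton's theorem to apply.
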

\begin{proof} According to the formula (\ref{desdif}), if $ \Omega_X$ is trivial then $\Omega_{X_\theta}$ is equivalent to $\theta \mathcal K_{2}$. This yields, using Kalton's theorem above, that $X_\theta$ is $L_{2/\theta}$
and $X$ is, according to \cite[Proposition 4.2]{ccfg}, a weighted version of the Hilbert K\"othe space on the prefixed base space.\end{proof}


\begin{thebibliography}{99}

\bibitem{berlof} J. Bergh and J. L\"{o}fstrom, \emph{Interpolation
spaces}, Grund. der math. Wissenschaften. 223, Springer, 1976.


\bibitem{cabefac} F. Cabello S\'{a}nchez,
\emph{Factorization in Lorentz spaces, with an application to centralizers,}
J. Math. Anal. Appl.  446 (2017) 1372--1392.

\bibitem{cabepoint} F. Cabello S\'anchez, \emph{Pointwise tensor products of function spaces}, J. Math. Anal. Appl. 418 (2014) 317-335.




\bibitem{ccfg} J.M.F. Castillo, W.H.G. Corr\^ea, V. Ferenczi and M. Gonzalez, \emph{On the stability of the differential process generated by complex interpolation}, J. Math. Inst. Jussieu (2020) arXiv:1712.09647v2.

\bibitem{cfg} J.M.F. Castillo, V. Ferenczi, M. Gonz\'alez, \emph{Singular exact sequences generated by complex interpolation},
Trans. Amer. Math. Soc. 369 (2017) 4671--4708.

\bibitem{newderivation} J.M.F. Castillo, D. Morales, J. Su\'arez, \emph{Derivation of vector valued complex interpolation scales,} J. Math. Anal. Appl. 468 (2018) 461-472.

\bibitem{Coifman1982} R.R. Coifman, M. Cwikel, R. Rochberg, Y. Sagher, G. Weiss, \emph{A theory of complex interpolation for families of Banach spaces}, Adv. in Math. 43 (1982) 203-229.

\bibitem {correa} W.H.G. Correa, \emph{Complex interpolation of families of Orlicz sequence spaces},
Israel J. Math. (in press) DOI: 10.1007/s11856-020-2068-y

\bibitem{cwikel} M. Cwikel, \emph{Lecture Notes on Duality and Interpolation Spaces}, (2014), arXiv:0803.3558v2, 36 pp.



\bibitem{kaltcompo} N.J. Kalton, \emph{The three-space problem for locally bounded F-spaces}, Compositio Math. 37 (1978) 243--276.

\bibitem{kaltmem} N.J. Kalton, \emph{Nonlinear commutators in interpolation theory,}
Mem. Amer. Math. Soc. 385, 1988.
s

\bibitem{kaltdiff} N.J. Kalton, \emph{Differentials of complex interpolation processes for
K\"othe function spaces}, Trans. Amer. Math. Soc. 333 (1992)
479--529.

\bibitem{kaltpeck} N.J. Kalton, N.T. Peck, \emph{Twisted sums of sequence spaces and the three space problem,}
Trans. Amer. Math. Soc. 255 (1979), 1--30.

\bibitem{kaltonmontgomery} N.J. Kalton and S. Montgomery-Smith, \emph{Interpolation of Banach Spaces,}
Chapter 26 in Handbook of the Geometry of Banach Spaces, Vol. 2, 1131-1175, North-Holland 2003.



\bibitem{qiu} Y. Qiu,
\emph{On the effect of rearrangement on complex interpolation for families of Banach spaces},
Rev. Mat. Iberoam. 31 (2015) 439--460.

\bibitem{sesta} V. A. Shestakov, \emph{On complex interpo-
lation of Banach spaces of measurable functions}, Vestnik Leningrad. Univ. 19 (1974), 64 - 68 (in
Russian); English transl. in Vestnik Leningrad Univ. 7 (1979).

\end{thebibliography}
\end{document}